\newtheorem{thm}{Theorem}[section]
\newtheorem{cor}[thm]{Corollary}
\newtheorem{lem}[thm]{Lemma}
\newtheorem{prop}[thm]{Proposition}
\theoremstyle{definition}
\newtheorem{rem}[thm]{Remark}
\numberwithin{equation}{section}
\DeclareMathOperator{\NN}{\mathbb {N}}
\DeclareMathOperator{\ZZ}{\mathbb {Z}}
\DeclareMathOperator{\htt}{ht}
\DeclareMathOperator{\un}{un}
\def\a {\mathbf a}
\def\k {\mathrm{k}}
\begin{document}

\title{Multiplicity of powers of squarefree monomial ideals}

\author{Phan Thi Thuy}
\address{Department of Mathematics, Hanoi National University of Education, 136 Xuan
Thuy, Hanoi, Vietnam}
\email{thuysp1@gmail.com}

\author{Thanh Vu}
\address{Institute of Mathematics, VAST, 18 Hoang Quoc Viet, Hanoi, Vietnam}
\email{vuqthanh@gmail.com}

\subjclass[2020]{13H15, 05E40, 13F55}
\keywords{multiplicity; squarefree monomial ideal; power of ideal; path ideal; cycle}

\date{}

\commby{}

\begin{abstract}
    Let $I$ be an arbitrary nonzero squarefree monomial ideal of dimension $d$ in a polynomial ring $S = \k[x_1,\ldots,x_n]$. Let $\mu$ be the number of associated primes of $S/I$ of dimension $d$. We prove that the multiplicity of powers of $I$ is given by 
    $$e_0(S/I^s) = \mu \binom{n-d+s-1}{s-1},$$ 
    for all $s \ge 1$. Consequently, we compute the multiplicity of all powers of path ideals of cycles.
\end{abstract}

\maketitle

\section{Introduction}
\label{sect_intro}
Let $S = \k[x_1,\ldots,x_n]$ be a standard graded polynomial ring over a field $\k$ and $M$ a finitely generated graded $S$-module. The Hilbert function of $M$ is given by $H_M(a) = \dim_\k M_a$ for all $a \in \ZZ$. Hilbert proved that when $a$ is large enough, $H_M$ agrees with a polynomial of degree $d = \dim (M)-1$ in $a$, called the Hilbert polynomial of $M$. Furthermore, the Hilbert polynomial of $M$ can be written as 
$$P_M(z) = \sum_{i=0}^d (-1)^i e_i(M) \binom{z + d -i}{d-i}$$
where $e_i(M)$ are integers. The first coefficient $e_0(M)$ is called the multiplicity of $M$. When $I$ is a homogeneous ideal of $S$, Herzog, Puthenpurakaland, and Verma \cite[Theorem 1.1]{HPV} proved that $e_i(M/I^kM)$ is of polynomial type of degree $\le n - d + i$ for all $i = 0, \ldots, d$, where $d = \dim (M/IM)$. Recently, Shan, Wang, and Lu \cite{SWL} computed the multiplicity of powers of path ideals of paths. To our knowledge and surprise, there are no other known examples of a nontrivial family of ideals for which the multiplicity of powers is explicitly written down. On the other hand, for arbitrary squarefree monomial ideal $I$, by \cite[Corollary 6.2.3]{HH}, $e_0(S/I)$ equals the number of associated primes $Q$ of $S/I$ of dimension $\dim(S/I)$. To understand this myth, we start by calculating the multiplicity of powers of path ideals of cycles. We realize a similar result to \cite{SWL} holds for arbitrary squarefree monomial ideals. In other words, for an arbitrary squarefree monomial ideal $I$, the multiplicity of powers of $I$ is already encoded in the dimension and the multiplicity of $I$ itself.

\begin{thm}\label{thm_mul} Let $I$ be a nonzero squarefree monomial ideal of dimension $d$ in $S$. Let $\mu$ be the number of associated primes of $S/I$ of dimension $d$. Then 
    $$e_0(S/I^s) = \mu \binom{n-d+s-1}{s-1},$$ 
    for all $s \ge 1$.
\end{thm}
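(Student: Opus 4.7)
The plan is to invoke the associativity formula for multiplicity and reduce the computation to a length calculation at each minimal prime of $I$ of dimension $d$.

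First I would recall standard structural facts about squarefree monomial ideals: every associated prime of $S/I$ is a monomial prime of the form $P_F = (x_i : i \in F)$ for some $F \subseteq \{1,\ldots,n\}$, and such a prime has dimension $n - |F|$. So the minimal primes of $I$ of dimension $d$ correspond to subsets $F$ of size $n-d$, and there are exactly $\mu$ of them, say $P_1, \ldots, P_\mu$. Each quotient $S/P_j$ is isomorphic to a polynomial ring in $d$ variables, so $e_0(S/P_j) = 1$.

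Next I would apply the associativity formula for multiplicity. Since $I^s$ and $I$ have the same minimal primes and $\dim(S/I^s) = d$, the formula gives
\[ e_0(S/I^s) = \sum_{j=1}^{\mu} \ell_{S_{P_j}}\bigl((S/I^s)_{P_j}\bigr)\cdot e_0(S/P_j) = \sum_{j=1}^{\mu} \ell_{S_{P_j}}\bigl(S_{P_j}/I^s S_{P_j}\bigr). \]

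The key step is then the length computation at each minimal prime. Using the primary decomposition $I = \bigcap_Q Q$ of $I$ as the intersection of its minimal primes and localizing at $P_j$ (only the factor $Q = P_j$ survives, since no other minimal prime is contained in $P_j$), one obtains $I S_{P_j} = P_j S_{P_j}$ and hence $I^s S_{P_j} = (P_j S_{P_j})^s$. Since $S_{P_j}$ is a regular local ring of dimension $n-d$ whose maximal ideal $P_j S_{P_j}$ is generated by a regular system of parameters of length $n-d$, a standard count of monomials of degree less than $s$ in $n-d$ variables yields
\[ \ell\bigl(S_{P_j}/(P_j S_{P_j})^s\bigr) = \binom{n-d+s-1}{n-d} = \binom{n-d+s-1}{s-1}. \]
Summing over the $\mu$ minimal primes produces the claimed identity.

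The main technical hinge is the local identification $I S_{P_j} = P_j S_{P_j}$, which is precisely where the squarefree hypothesis enters; once that is in place, the remaining ingredients (the associativity formula and the length of a power of the maximal ideal in a regular local ring) are standard, so I do not anticipate a substantial obstacle beyond correctly bookkeeping the primary decomposition.
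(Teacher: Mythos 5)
Your proof is correct and follows essentially the same route as the paper: both arguments reduce the computation to the top-dimensional minimal primes of $I$ and show that each contributes $\binom{n-d+s-1}{s-1}$. The only cosmetic difference is that you invoke the associativity formula and compute the local length $\ell\bigl(S_{P_j}/(P_jS_{P_j})^s\bigr)$, whereas the paper proves the corresponding additivity statement from scratch (its Lemma on unmixed parts, via short exact sequences and Hilbert polynomials) and computes $e_0(S/(x_1,\ldots,x_{n-d})^s)$ globally --- and you are in fact more explicit than the paper about the key identification $I^sS_{P_j}=(P_jS_{P_j})^s$.
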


We then compute the multiplicity of $d$-path ideals of $n$-cycles. 

\begin{thm}\label{thm_path_ideals_cycles} Let $n > d \ge 2$ be positive integers and 
$$I_{n,d} = (x_1\cdots x_d,x_{2}\cdots x_{d+1}, \cdots, x_n x_1 \cdots x_{d-1}) \subset S = \k[x_1,\ldots,x_n]$$
be the $d$-path ideal of an $n$-cycle. Write $n = kd + r$ for some $k \ge 1$ and $1 \le r \le d$. Then 
$$e_0(S/I_{n,d}) = d \binom{k+d -r}{k} - \binom{k+d -r}{k+1}.$$
\end{thm}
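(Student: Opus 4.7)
The plan is to apply Theorem~\ref{thm_mul} with $s=1$: this gives $e_0(S/I_{n,d}) = \mu$, where $\mu$ is the number of associated primes of $S/I_{n,d}$ of maximum dimension. Since $I_{n,d}$ is squarefree, its associated primes coincide with its minimal primes, which are in bijection with the minimal vertex covers of the $d$-uniform hypergraph $H_{n,d}$ on $\ZZ/n\ZZ$ whose edges are $\{i, i+1, \ldots, i+d-1\}$ (indices mod $n$). The prime $P_C = (x_i : i \in C)$ has dimension $n - |C|$, so $\mu$ equals the number of \emph{minimum} vertex covers of $H_{n,d}$.

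Next I would translate the covering condition into a condition on gaps. A subset $C \subseteq \ZZ/n\ZZ$ is a vertex cover iff no $d$ cyclically consecutive vertices lie outside $C$; equivalently, if $(g_1,\ldots,g_{|C|})$ is the cyclic gap sequence between consecutive elements of $C$, then $g_i \le d$ for every $i$. Since $g_i \ge 1$ and $\sum g_i = n = kd + r$ with $1 \le r \le d$, this forces $|C| \ge k+1$, and the bound is attained. Hence minimum covers are precisely the size-$(k+1)$ subsets of $\ZZ/n\ZZ$ whose cyclic gap sequence lies in $[1,d]^{k+1}$.

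To count them, the standard cycle-to-line bijection yields $(k+1)\mu = nL$, where $L$ is the number of linear compositions $(g_1,\ldots,g_{k+1})$ of $n$ with parts in $[1,d]$: pairs $(C, p \in C)$ biject with pairs (such a linear composition, position of $p$ in $\ZZ/n\ZZ$). Substituting $h_i = d - g_i \in [0, d-1]$ recasts $L$ as the number of nonnegative integer solutions of $\sum_{i=1}^{k+1} h_i = (k+1)d - n = d - r$; since $d - r \le d - 1$, the upper bounds are automatic, and stars-and-bars gives $L = \binom{k+d-r}{k}$. Hence $\mu = \frac{n}{k+1}\binom{k+d-r}{k}$, which a one-line rearrangement using $\binom{k+d-r}{k+1} = \binom{k+d-r}{k}\cdot \frac{d-r}{k+1}$ converts to the claimed expression $d\binom{k+d-r}{k} - \binom{k+d-r}{k+1}$.

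The hard part is the combinatorial identification in the second paragraph: once minimum vertex covers of $H_{n,d}$ are recognized as cyclic compositions of $n$ into $k+1$ parts drawn from $[1,d]$, the remaining steps (the cycle-to-line trick, stars-and-bars, and the binomial rearrangement) are routine.
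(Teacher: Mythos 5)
Your proof is correct, and while it follows the same overall strategy as the paper --- reduce via Theorem~\ref{thm_mul} (equivalently Lemma~\ref{lem_unmixed}) to counting the top-dimensional minimal primes of $I_{n,d}$, and identify these with cyclic gap sequences of length $k+1$ with parts in $[1,d]$ summing to $n$ --- the enumeration step is genuinely different and somewhat slicker. The paper characterizes \emph{all} associated primes first (Proposition~\ref{prop_ass_cyc}, including the minimality condition $a_{i+2}-a_i>d$), then parametrizes the minimum-height ones by the base point $a_1$ together with the gaps $b_1,\ldots,b_k$ (Lemma~\ref{lem_red_1}), slices the count by the value of $\sum b_i$, applies the bounded-composition count of Lemma~\ref{lem_red_2} to each slice, and finishes with a sum of binomial identities. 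You instead observe that only \emph{minimum}-cardinality covers are needed (which are automatically minimal, so the minimality condition of Proposition~\ref{prop_ass_cyc} never enters), and you count them by the cycle-to-line double count $(k+1)\mu = nL$ followed by a single stars-and-bars computation $L=\binom{k+d-r}{k}$; this collapses the paper's summation and final identity manipulation into the one-line rearrangement
$$\frac{kd+r}{k+1}\binom{k+d-r}{k} \;=\; d\binom{k+d-r}{k}-\binom{k+d-r}{k+1},$$
which checks out since $\binom{k+d-r}{k+1}=\frac{d-r}{k+1}\binom{k+d-r}{k}$. Both arguments rest on the same combinatorial core (minimum covers of the cyclic $d$-path hypergraph are cyclic compositions of $n$ into $k+1$ parts from $[1,d]$); yours buys a shorter count at the cost of invoking the standard correspondence between minimal primes of a squarefree monomial ideal and minimal vertex covers, while the paper's buys a self-contained description of the full set of associated primes (Proposition~\ref{prop_ass_cyc}), which it also uses to get Corollary~\ref{cor_dim_cyc}.
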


\section{Multiplicity of ideals}\label{sec_pre}

Throughout the paper, let $S = \k[x_1, \ldots, x_n]$. The materials in this section are well-known to the experts in algebraic geometry. Nonetheless, we could not find explicit references, we include an argument here for completeness. We refer to \cite[Section I.7]{Ha} for more details.

\begin{lem}\label{lem_unmixed} Let $I$ be a nonzero homogeneous ideal of $S$. Assume that $I$ has a primary decomposition $I = Q_1 \cap \cdots \cap Q_s \cap Q_{s+1} \cap \cdots \cap Q_t$ where $\htt(Q_i) = \htt(I)$ for $i = 1, \ldots, s$ and $\htt(Q_j) > \htt(I)$ for $i = s+1, \ldots, t$. Denote the unmixed part of $I$ by by $I^{\un} = Q_1 \cap \cdots \cap Q_s$. Then 
$$e_0(S/I) = e_0(S/I^{\un}) = \sum_{i=1}^s e_0(S/Q_i).$$    
\end{lem}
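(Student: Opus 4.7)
The plan is to isolate the top-dimensional contribution of the Hilbert polynomial and use short exact sequences to reduce the multiplicity to a sum over the top-dimensional primary components. Set $h = \htt(I)$ and $d = n - h$; the primary decomposition immediately gives $\dim(S/I) = \dim(S/I^{\un}) = d$, since the components of height greater than $h$ are lower-dimensional. The basic tool I will use throughout is that in a short exact sequence $0 \to A \to B \to C \to 0$ of finitely generated graded $S$-modules with $\dim B = d$, deleting an end term of dimension strictly less than $d$ preserves the leading term of the Hilbert polynomial, and hence $e_0$.

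For the first equality $e_0(S/I) = e_0(S/I^{\un})$, I would set $J = Q_{s+1} \cap \cdots \cap Q_t$, so that $I = I^{\un} \cap J$, and consider the short exact sequence
$$0 \to I^{\un}/I \to S/I \to S/I^{\un} \to 0.$$
The second isomorphism theorem identifies $I^{\un}/I \cong (I^{\un}+J)/J$, which is a submodule of $S/J$. Since every minimal prime of $J$ has height greater than $h$, we have $\dim(S/J) < d$, and therefore $\dim(I^{\un}/I) < d$. Additivity of $e_0$ then yields the first equality.

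For the second equality I would induct on $s$, setting $J_k = Q_1 \cap \cdots \cap Q_k$ and using the Mayer--Vietoris-style exact sequence
$$0 \to S/J_{k+1} \to S/J_k \oplus S/Q_{k+1} \to S/(J_k + Q_{k+1}) \to 0.$$
The main obstacle is establishing $\dim(S/(J_k+Q_{k+1})) < d$. For this, any prime $P$ containing $J_k + Q_{k+1}$ contains $\sqrt{Q_{k+1}}$ and, since $J_k = \bigcap_{i \le k} Q_i$, also contains $\sqrt{Q_i}$ for some $i \le k$; as the $\sqrt{Q_i}$ are distinct primes of height $h$, $P$ strictly contains $\sqrt{Q_i}$, and because $S$ is Cohen--Macaulay (and catenary) a strict inclusion of primes forces $\htt(P) > h$. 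Thus $\dim(S/(J_k + Q_{k+1})) < d$, and the above short exact sequence yields $e_0(S/J_{k+1}) = e_0(S/J_k) + e_0(S/Q_{k+1})$, which closes the induction.
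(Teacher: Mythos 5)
Your proof is correct and follows essentially the same route as the paper: additivity of Hilbert polynomials over short exact sequences, discarding the lower-dimensional terms, and the same induction on the number of top-dimensional components with the same dimension bound on $S/(J_k + Q_{k+1})$. The only cosmetic difference is that for the first equality you use $0 \to I^{\un}/I \to S/I \to S/I^{\un} \to 0$ together with the second isomorphism theorem, whereas the paper uses the Mayer--Vietoris-type sequence $0 \to S/I \to S/I^{\un} \oplus S/L \to S/(I^{\un}+L) \to 0$; both reduce to the same dimension count (and your appeal to Cohen--Macaulayness is unnecessary, since a strict inclusion of primes already increases height).
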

\begin{proof} Let $M$ be a finitely generated graded $S$-module. Denote by $d = \dim (M)$. By Hilbert's Theorem \cite[Theorem 6.1.3]{HH}, there exists a Laurent polynomial $Q_M(t) \in \ZZ[t,t^{-1}]$ with $Q_M(1) > 0$ such that the Hilbert's series of $M$ is
$$\operatorname{Hilb}_M(t) = \frac{Q_M(t)}{(1-t)^d}.$$
In particular, $e_0(M) = Q_M(1)$ and $e_0(M)/(d-1)!$ is the leading coefficient of the Hilbert polynomial of $M$. 

First, assume that $t > s$. Let $L = Q_{s+1} \cap \cdots \cap Q_t$. From the exact sequence 
$$0 \to S/I \to S/I^{\un} \oplus S/L \to S/(I^{\un} + L) \to 0$$
and the additivity of the Hilbert polynomial, we deduce that 
$$P_{S/I^{\un}}(t) + P_{S/L}(t) = P_{S/I}(t) + P_{S/(I^{\un} + L)}(t).$$
Since $\dim (S/L) < \dim(S/I)$ and $\dim(S/(I^{\un} + L)) < \dim(S/I)$, we deduce that $e_0(S/I) = e_0(S/I^{\un})$. 

We now assume that $I$ is unmixed and prove that $e_0(S/I) = \sum_{i=1}^s e_0(S/Q_i)$ by induction on $s$. The base case $s=1$ is vacuous. Assume that the statement holds for $s$. Let $J = Q_1 \cap \cdots \cap  Q_s$. With the same argument as above, it suffices to prove that 
\begin{equation}\label{eq_dim}
    \dim (S/(J + Q_{s+1})) < \dim(S/Q_{s+1}).
\end{equation}
Let $P$ be any associated prime of $S/(J + Q_{s+1})$. Then we have $Q_{s+1} \subseteq P$. Hence, $Q = \sqrt{Q_{s+1}} \subseteq P$. If $Q = P$ then $J \subseteq Q$, which is a contradiction. Hence, $Q \subsetneq P$. The conclusion follows.    
\end{proof}
\begin{rem} Assume that $I$ is a radical ideal of $S$. Then $I^{(s)} = P_1^s \cap \cdots \cap P_m^s$, where $I^{(s)}$ is the $s$-th symbolic power of $I$ and $P_1, \ldots, P_m$ are the minimal associated primes of $S/I$. By Lemma \ref{lem_unmixed}, we have    
$$e_0\left ( S/I^{(s)} \right ) =  e_0(S/I^s).$$
\end{rem}
\begin{proof}[Proof of Theorem \ref{thm_mul}] The conclusion follows from Lemma \ref{lem_unmixed} and the following facts
\begin{enumerate}
    \item associated primes of monomial ideals are generated by variables of $S$;
    \item $e_0(S/(x_1,\ldots,x_{n-d})^s) = \binom{n-d+s-1}{s-1}$.
\end{enumerate}
The conclusion follows.
\end{proof}

\section{Multiplicity of path ideals of cycles}
In this section, we extend the work of Shan, Wang, and Lu \cite{SWL} to the case of path ideals of cycles. Let $n \ge d \ge 2$ be positive integers and $S = \k[x_1,\ldots, x_n]$ be a standard graded polynomial ring over a field $\k$. We denote by 
$$g_1 = x_1\cdots x_d, g_2 = x_{2}\cdots x_{d+1}, \cdots, g_n = x_n x_1 \cdots x_{d-1},$$
and $I_{n,d} = (g_1, \ldots, g_n)$ the $d$-path ideal of an $n$-cycle. In general, path ideals of cycles are more subtle than path ideals of paths; see \cite{AF, BCV1, BCV2} for more information. First, we describe the associated primes of $S/I_{n,d}$.

\begin{prop}\label{prop_ass_cyc} Let $1 \le a_1 < a_2 < \cdots < a_s \le n$ be positive integers. Denote by $a_{s+1} = a_1 + n$ and $a_{s+2} = a_2 + n$. Then $P_\a = (x_{a_1}, \ldots, x_{a_s})$ is an associated prime of $S/I_{n,d}$ if and only if the following conditions hold:
\begin{enumerate}
    \item $a_{i+1} - a_i \le d$,
    \item $a_{i+2} - a_i > d$, 
\end{enumerate}
  for $i = 1, \ldots, s$.   
\end{prop}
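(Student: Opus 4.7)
The plan is to exploit the fact that $I_{n,d}$ is a squarefree, hence radical, ideal. Its associated primes therefore coincide with its minimal primes, and all such primes are generated by subsets of the variables of $S$. So the task reduces to characterizing when $P_\a$ is a minimal monomial prime containing each $g_i$, i.e., when $\{a_1,\ldots,a_s\}$ is a minimal hitting set for the cyclic $d$-windows on $\{1,\ldots,n\}$.

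The first step is a combinatorial reformulation of the inclusion $I_{n,d} \subseteq P_\a$. The support of $g_i$ is the cyclic window $\{i, i+1, \ldots, i+d-1\} \pmod n$, so $g_i \in P_\a$ means some $a_j$ lies in that window. Hence $P_\a \supseteq I_{n,d}$ iff every cyclic window of $d$ consecutive indices meets $\{a_1, \ldots, a_s\}$. Using the convention $a_{s+1} = a_1 + n$, the gaps between consecutive chosen indices around the cycle are exactly $a_{i+1} - a_i$ for $i = 1, \ldots, s$. I would argue that this covering condition is equivalent to condition (1): if some $a_{i+1} - a_i > d$, then the window starting at $a_i + 1$ lies entirely inside the gap and misses every $a_j$; conversely, for any window $[k, k+d-1]$, taking the largest $a_i \le k$ (cyclically) gives $a_{i+1} \le a_i + d \le k + d$, so either $a_i = k$ or $a_{i+1}$ lies in the window.

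The second step is to characterize minimality of $P_\a$ among primes containing $I_{n,d}$. Given condition (1), minimality says that removing any single $x_{a_j}$ breaks the covering property. When $x_{a_j}$ is removed, the two adjacent gaps merge into a single gap of size $a_{j+1} - a_{j-1}$, and by the previous step the reduced set fails to cover some window iff this merged gap exceeds $d$. Running this over all $j$ and reindexing yields precisely condition (2): $a_{i+2} - a_i > d$ for all $i$. Combining the two steps gives the claimed equivalence.

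The only genuine care needed lies with the cyclic indexing at the ``seam'' $a_s \to a_1 + n$, which is exactly what the conventions $a_{s+1} = a_1 + n$ and $a_{s+2} = a_2 + n$ in the statement encode; in particular, the gap argument and the merging argument must be applied uniformly to the wrap-around gaps. Aside from this bookkeeping, the argument is a routine translation between an algebraic prime-inclusion statement and a combinatorial cover-and-minimality statement, and I do not anticipate any deeper obstacle.
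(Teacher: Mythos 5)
Your argument is correct, and it reaches the result by a genuinely different (if closely related) route from the paper's. You use the standard dictionary for squarefree monomial ideals: $I_{n,d}$ is radical, so $\operatorname{Ass}(S/I_{n,d})=\operatorname{Min}(I_{n,d})$, and the minimal primes are exactly the minimal vertex covers of the clutter of supports of the $g_i$; the proposition then reduces to the purely combinatorial statement that the minimal covers of the cyclic $d$-windows are the sets whose single gaps are all $\le d$ (covering) and whose double gaps are all $>d$ (minimality), which you verify correctly, including the wrap-around gaps via the conventions $a_{s+1}=a_1+n$, $a_{s+2}=a_2+n$. The paper instead uses the characterization of associated primes of monomial ideals as colon ideals $P=I:f$ with $f$ a monomial: for sufficiency it exhibits the explicit witness $f=\prod_{i\notin\{a_1,\ldots,a_s\}}x_i$ and checks $I_{n,d}:f=P_\a$, and for necessity it derives the two gap conditions directly from $P_\a=I_{n,d}:f$. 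The combinatorial core is identical in both proofs --- a gap $>d$ produces a window missing every $a_j$, and a double gap $\le d$ makes one chosen variable redundant --- but your reduction buys a cleaner argument that never has to verify a specific colon computation, while the paper's buys an explicit witness monomial and a template that would still apply when $\operatorname{Ass}\ne\operatorname{Min}$. I see no gap in your proposal; the one point worth stating explicitly in a written version is the (standard) fact that a monomial prime contains a squarefree monomial ideal iff its variable set meets the support of every minimal generator, and that minimal primes of monomial ideals are themselves monomial primes, so minimality among monomial primes is the same as minimality among all primes.
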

\begin{proof} First, we prove the sufficient condition. We assume that $\a = (a_1, \ldots, a_s)$ satisfies the two conditions above. Let $U = \{1,\ldots,n\} \setminus \{a_1, \ldots, a_s\}$ and $f = \prod_{i \in U} x_i$. Condition (1) implies $f \notin I_{n,d}$. Since $I_{n,d}$ is squarefree, we deduce that $x_j \notin I_{n,d} : f$ for all $j \in U$. Conditions (1) and (2) imply that 
$$x_{a_i} = g_{a_{i-1}+1} / \gcd(g_{a_{i-1}+1},f) \text{ for all } i =1, \ldots, s.$$ 
Hence, $I_{n,d} : f = P_{\a}$. Therefore, $P_\a$ is an associated prime of $S/I_{n,d}$. 

Conversely, assume that $P_\a$ is an associated prime of $S/I_{n,d}$. By \cite[Corollary 1.3.10]{HH}, there exists a monomial $f$ of $S$ such that $P_\a = I_{n,d} : f$. Since $I_{n,d}$ is squarefree, we may choose $f$ to be squarefree. In particular, $f \notin I_{n,d}$. Assume by contradiction that $a_{i+1} - a_i > d$ for some $i$. Then $g = x_{a_i+1} \cdots x_{a_i + d-1} \in I_{n,d} \subset P_\a$, which is a contradiction. Hence, $a_{i+1} - a_i \le d$. Now, assume by contradiction that $a_{i+2} - a_i \le d$ for some $i$. We have $x_{a_i}, x_{a_{i+2}}$ does not divide $f$. Now for any minimal generator $g$ of $I_{n,d}$ such that $x_{i+1} | g$ we must have either $x_{a_i}$ or $x_{a_{i+2}}$ divides $g$. In other words, $g / \gcd (g,f)$ is divisible by $x_{a_i} x_{a_{i+1}}$ or $x_{a_{i+1}} x_{a_{i+2}}$. Hence, $x_{a_{i+1}} \notin I_{n,d} : f$, a contradiction. The conclusion follows.   
\end{proof}
Consequently, we have
\begin{cor}\label{cor_dim_cyc} Let $n \ge d \ge 2$ be positive integers. Then $\dim (S/I_{n,d}) = n - \lceil \frac{n}{d} \rceil$. 
\end{cor}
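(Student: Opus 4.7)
The plan is to extract the dimension directly from Proposition \ref{prop_ass_cyc} by translating the question into a purely combinatorial minimization over tuples. Since $I_{n,d}$ is a squarefree monomial ideal, every minimal prime of $S/I_{n,d}$ is an associated prime and therefore of the form $P_\a$ described in Proposition \ref{prop_ass_cyc}. Consequently,
\[\dim(S/I_{n,d}) \;=\; n - h, \qquad h \;:=\; \min\bigl\{s : \text{some } 1 \le a_1 < \cdots < a_s \le n \text{ satisfies (1) and (2) of Proposition \ref{prop_ass_cyc}}\bigr\},\]
so it suffices to show $h = \lceil n/d \rceil$.

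For the lower bound $h \ge \lceil n/d \rceil$, I would use condition (1) together with the cyclic convention $a_{s+1} = a_1 + n$ to write
\[n \;=\; \sum_{i=1}^{s} (a_{i+1} - a_i) \;\le\; sd,\]
which forces $s \ge n/d$ and therefore $s \ge \lceil n/d \rceil$.

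For the upper bound $h \le \lceil n/d \rceil$, I would exhibit an explicit tuple of that size. Writing $n = qd + r'$ with $0 \le r' < d$, I set $a_i = (i-1)d + 1$ for $i = 1, \ldots, q$. When $r' = 0$, this is already of size $\lceil n/d \rceil = q$; every consecutive gap equals $d$ and every second-neighbor difference equals $2d$, so (1) and (2) hold. When $r' \ge 1$, I append $a_{q+1} = qd + 1$: the two wrap-around gaps $a_{q+1} - a_q$ and $a_1 + n - a_{q+1}$ are $d$ and $r'$, while the three wrap-around second-neighbor differences $a_{q+1} - a_{q-1}$, $a_1 + n - a_q$, and $a_2 + n - a_{q+1}$ equal $2d$, $d + r'$, and $d + r'$, all strictly greater than $d$ because $r' \ge 1$.

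The main obstacle is the case-split on $r'$ in the upper-bound construction and the bookkeeping at the cyclic seam, where conditions (1) and (2) must be verified at the indices involving the wrap-around terms $a_1 + n$ and $a_2 + n$. Combining the two bounds yields $h = \lceil n/d \rceil$, hence $\dim(S/I_{n,d}) = n - \lceil n/d \rceil$, completing the proof.
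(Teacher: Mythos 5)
Your proposal is correct and follows essentially the same route as the paper: the lower bound $s \ge \lceil n/d \rceil$ comes from summing the gaps $a_{i+1}-a_i \le d$ around the cycle, and the upper bound comes from the explicit tuple $a_i = 1+(i-1)d$, which is exactly the witness the paper uses. Your only addition is the explicit case split on $r'$ and the verification of conditions (1) and (2) at the wrap-around indices, which the paper leaves to the reader.
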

\begin{proof} We use the notation as in Proposition \ref{prop_ass_cyc}. Let $1 \le a_1 < a_2 < \cdots < a_s \le n$ be positive integers such that $P_\a$ is an associated prime of $S/I_{n,d}$. By Proposition \ref{prop_ass_cyc}, we have $a_{i+1} - a_i \le d$ for all $i = 1, \ldots, s$. In particular, $n = a_{s+1} - a_1 \le s d$. Hence, $s \ge \lceil \frac{n}{d} \rceil$ and $\dim (S/I_{n,d}) \le n - \lceil \frac{n}{d} \rceil$.

Furthermore, let $s = \lceil \frac{n}{d} \rceil$ and $a_i = 1 + (i-1)d$ for all $i = 1, \ldots, s$. Then $\a = (a_1, \ldots, a_s)$ satisfies the condition of Proposition \ref{prop_ass_cyc}. Hence, $P_\a$ is an associated prime of $S/I_{n,d}$. The conclusion follows.
\end{proof}

We now fix the following notation throughout the rest of the paper. 
\begin{enumerate}
    \item $n = kd + r$ for some integers $k,r$ such that $k \ge 1$ and $1 \le r \le d$;
    \item $\a = (a_1, \ldots, a_{k+1}) \in \NN^{k+1}$ be such that $1 \le a_1 < a_2 < \cdots < a_{k+1} \le n$;
    \item $a_{k+2} = a_1 + n$ and $b_j = a_{j+1} - a_j$ for all $j = 1, \ldots, k+1$.
\end{enumerate}
To compute the multiplicity of $S/I_{n,d}$ we need some preparation lemmas.
\begin{lem}\label{lem_red_1} With the notations as above we have $P_\a$ is an associated prime of $S/I_{n,d}$ if and only if the following conditions hold
\begin{enumerate}
    \item $b_i \le d$ for all $i = 1, \ldots k$,
    \item $\sum_{i=1}^k b_i \ge (k-1)d + r$,
    \item $a_1 + \sum_{i=1}^k b_i \le kd +r$.
\end{enumerate}    
\end{lem}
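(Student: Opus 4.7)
The plan is to reduce Lemma \ref{lem_red_1} to the characterization already established in Proposition \ref{prop_ass_cyc}, noting that by Corollary \ref{cor_dim_cyc} an associated prime of the form $P_\a$ with $\a$ of length $k+1$ is top-dimensional. I would begin by recording the identity
$$\sum_{i=1}^{k+1} b_i \;=\; a_{k+2} - a_1 \;=\; n \;=\; kd + r,$$
which is what lets me trade conditions on $b_{k+1}$ for conditions on $b_1, \ldots, b_k$.

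Next I would translate the two conditions of Proposition \ref{prop_ass_cyc} (applied with $s = k+1$) into the conditions of the lemma. The bound $b_i \le d$ for $i = 1, \ldots, k+1$ splits into the inequalities for $i \le k$, which is condition (1), together with the inequality $b_{k+1} \le d$. Using the sum identity above, this last inequality is equivalent to $\sum_{i=1}^k b_i \ge n - d = (k-1)d + r$, which is condition (2). Condition (3), $a_1 + \sum_{i=1}^k b_i \le kd + r$, is simply the relation $a_{k+1} \le n$ from the standing hypothesis on $\a$; stating it explicitly makes the parameterization of associated primes by tuples $(a_1, b_1, \ldots, b_k)$ self-contained, which is how the lemma will be used later.

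The one step with real content is to verify that the second condition of Proposition \ref{prop_ass_cyc}, namely $b_i + b_{i+1} > d$ for every $i$ (indices modulo $k+1$), is automatic once $b_j \le d$ holds for all $j$. I would prove this by a pigeonhole-style bound: since the $k-1$ terms omitted from $b_i + b_{i+1}$ are each at most $d$,
$$b_i + b_{i+1} \;=\; n - \sum_{j \notin \{i, i+1\}} b_j \;\ge\; n - (k-1)d \;=\; d + r \;>\; d,$$
where the final strict inequality uses the hypothesis $r \ge 1$. Combining the three observations yields the asserted equivalence.

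I do not expect any serious obstacle. The only things to keep straight are the cyclic indexing on $b_{k+1}$ and the fact that $b_{k+1} \ge 1$ is automatic from $a_{k+1} \le n < a_1 + n = a_{k+2}$, so one does not need to impose it separately.
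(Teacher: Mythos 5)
Your proof is correct and follows essentially the same route as the paper: translate the two conditions of Proposition \ref{prop_ass_cyc} using $\sum_{i=1}^{k+1} b_i = n$, and then show that $b_i + b_{i+1} > d$ is automatic from the bounds $b_j \le d$. Your direct estimate $b_i + b_{i+1} \ge n - (k-1)d = d + r > d$ is just the contrapositive of the paper's contradiction argument, and if anything it is slightly cleaner since it covers the cyclic wraparound indices $i = k, k+1$ uniformly, which the paper's proof leaves implicit.
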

\begin{proof} By definition and Proposition \ref{prop_ass_cyc}, it suffices to prove that the conditions above implies that $b_i + b_{i+1} > d$ for all $i = 1, \ldots, k+1$. Indeed, if $b_i + b_{i+1} \le d$ for some $i \le k-1$, then $\sum_{i=1}^k b_i \le (k-2)d + d < (k-1)d + r$. This is a contradiction, the conclusion follows.    
\end{proof}
\begin{lem}\label{lem_red_2} For each positive integers $k,d$, let 
$$ U_{k,d}  = \left \{ (c_1, \ldots, c_{k+1}) \in \NN^{k+1} \mid c_i \le d, \sum_1^{k+1} c_i = kd \right \}.$$
Then $|U_{k,d} | = \binom{d+k}{d}$.
\end{lem}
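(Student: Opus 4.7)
The plan is to reduce the counting of $U_{k,d}$ to a standard stars-and-bars enumeration via a complementary change of variables, after which the upper-bound constraint becomes automatic.

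First, I would introduce $c_i' = d - c_i$ for each $i = 1,\ldots,k+1$. Since $0 \le c_i \le d$, the new variables satisfy $0 \le c_i' \le d$, and
\[
\sum_{i=1}^{k+1} c_i' \;=\; (k+1)d - \sum_{i=1}^{k+1} c_i \;=\; (k+1)d - kd \;=\; d.
\]
Thus the map $(c_1,\ldots,c_{k+1}) \mapsto (c_1',\ldots,c_{k+1}')$ is a bijection from $U_{k,d}$ onto the set
\[
V_{k,d} \;=\; \bigl\{(c_1',\ldots,c_{k+1}') \in \NN^{k+1} \mid c_i' \le d,\ \textstyle\sum_{i=1}^{k+1} c_i' = d\bigr\}.
\]

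Second, I would observe that the constraint $c_i' \le d$ is automatic in $V_{k,d}$: since the $c_i'$ are nonnegative and sum to $d$, each one is at most $d$. Hence $V_{k,d}$ is simply the set of weak compositions of $d$ into $k+1$ nonnegative parts, and by the usual stars-and-bars formula its cardinality is $\binom{d+k}{k} = \binom{d+k}{d}$.

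There is no real obstacle here; the only thing to verify carefully is that the complementary substitution does turn the affine condition $\sum c_i = kd$ into $\sum c_i' = d$, which is a direct computation. The lemma then follows by combining the bijection with the stars-and-bars count.
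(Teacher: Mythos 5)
Your proof is correct. It takes a different (and more elementary) route than the paper: the paper identifies the tuples $(c_1,\ldots,c_{k+1})$ with the degree-$kd$ monomials of the Artinian complete intersection $M = \k[x_1,\ldots,x_{k+1}]/(x_1^{d+1},\ldots,x_{k+1}^{d+1})$ and reads off $|U_{k,d}| = H_M(kd) = \binom{d+k}{d}$, which implicitly uses the symmetry $H_M(kd) = H_M(d)$ of this Gorenstein quotient (the socle degree is $(k+1)d$). Your complementary substitution $c_i' = d - c_i$ is exactly that symmetry made explicit at the level of exponent vectors, after which you correctly observe that the cap $c_i' \le d$ becomes vacuous and finish by stars and bars. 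Your argument is fully self-contained and avoids quoting a Hilbert function value; the paper's version is shorter but leans on a fact about monomial complete intersections that it does not justify in detail. Both yield the same binomial coefficient, and your verification that $\sum c_i' = d$ is the only computation that needs care.
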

\begin{proof}
    Each $(c_1, \ldots, c_{k+1}) \in U_{k,d}$ gives a monomial $x_1^{c_1} \cdots x_{k+1}^{c_{k+1}}$ of degree $kd$ in 
    $$M = \k[x_1,\ldots,x_{k+1}]/(x_1^{d+1},\ldots,x_{k+1}^{d+1}).$$
    Hence, 
    $$|U_{k,d}| = H_M(kd) = \binom{d+k}{d}.$$
    The conclusion follows.
\end{proof}

\begin{proof}[Proof of Theorem \ref{thm_path_ideals_cycles}] Let $n = kd + r$ for some integers $k,r$ such that $k \ge 1$ and $1 \le r \le d$. By Corollary \ref{cor_dim_cyc}, $\dim(S/I_{n,d}) = n - (k+1)$. By Lemma \ref{lem_unmixed} and Lemma \ref{lem_red_1}, we deduce that $e_0(S/I_{n,d})$ is equal to the cardinality of the set $V_{k,d,r}$ consisting of tuples $(a_1, b_1, \ldots, b_k) $ satisfying conditions of Lemma \ref{lem_red_1}. For each integer $s$, let 
$$W_{k,d,r,s} = \{ (b_1, \ldots, b_k) \in \NN^k \mid b_i \le d \text{ for all } i = 1, \ldots, k, \text{ and } \sum_{i=1}^k b_i = (k-1) d + r + s\}.$$
For each $(b_1, \ldots, b_k) \in W_{k,d,r,s}$ with $0 \le s \le d-r$ there are $d-s$ choices of $a_1$ so that $(a_1, b_1, \ldots, b_k) \in V_{k,d,r}$. Hence, 
$$|V_{k,d,r}| = \sum_{s= 0}^{d-r} (d-s) | W_{k,d,r,s}|.$$
Now, each $(b_1, \ldots, b_k) \in W_{k,d,r,s}$ must have $b_i \ge r + s$, as $b_i \le d$ for all $i = 1, \ldots, k$. Let $c_i = b_i - r - s$ for $i = 1, \ldots, k$. We then deduce that $W_{k,d,r,s} \cong U_{k-1,d-r-s}$. By Lemma \ref{lem_red_2}, we deduce that 
\begin{align*}
    e_0(S/I_{n,d}) &= |V_{k,d,r}| = \sum_{s=0}^{d-r} (d-s) \binom{d+k-r-s-1}{k-1}\\
    &=\sum_{j=r}^d j \binom{k-1 -r +j}{k-1}, \text{ setting } j = d-s \\
    & = r \sum_{\ell=0}^{d-r} \binom{k-1 + \ell }{k-1} + \sum_{\ell=0}^{d-r} \ell \binom{k-1+\ell}{k-1}, \text{ setting } \ell = j - r\\
    & = r \binom{k + d -r}{k} + (d-r) \binom{k+d-r}{k} - \binom{k+d-r}{k+1},
\end{align*}
where the last equality follows from standard binomial identities, see e.g. \cite{G}. The conclusion follows.
\end{proof}

\begin{cor} Let $n = kd + r$ where $k,d,r$ are positive integers such that $1 \le r \le d$. Then 
$$e_0(S/I_{n,d}^s) = \left ( d \binom {k+d-r}{k} - \binom{k+d-r}{k+1} \right) \binom{ k+s}{s-1}.$$  
\end{cor}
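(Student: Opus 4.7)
The plan is to apply Theorem \ref{thm_mul} directly to the ideal $I_{n,d}$ and substitute the values coming from Corollary \ref{cor_dim_cyc} and Theorem \ref{thm_path_ideals_cycles}. The only thing to be careful about is the notational collision: the symbol $d$ denotes the dimension in Theorem \ref{thm_mul}, but it denotes the path length in the present corollary. To avoid confusion I would (in the write-up) temporarily write $D = \dim(S/I_{n,d})$.

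First I would compute $D$. Since $n = kd + r$ with $1 \le r \le d$, we have $\lceil n/d \rceil = k+1$, so Corollary \ref{cor_dim_cyc} gives $D = n - (k+1)$. Consequently,
$$n - D + s - 1 = (k+1) + s - 1 = k + s,$$
so the binomial factor appearing in Theorem \ref{thm_mul} becomes exactly $\binom{k+s}{s-1}$.

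Next I would identify the quantity $\mu$ of Theorem \ref{thm_mul}, i.e.\ the number of associated primes of $S/I_{n,d}$ of maximal dimension. By Lemma \ref{lem_unmixed} this equals $e_0(S/I_{n,d})$, and Theorem \ref{thm_path_ideals_cycles} furnishes
$$\mu = d \binom{k+d-r}{k} - \binom{k+d-r}{k+1}.$$

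Plugging these into Theorem \ref{thm_mul} yields the claimed formula
$$e_0(S/I_{n,d}^s) = \mu \binom{n - D + s - 1}{s-1} = \left( d\binom{k+d-r}{k} - \binom{k+d-r}{k+1}\right)\binom{k+s}{s-1}.$$
There is no substantive obstacle here; the corollary is simply the concatenation of Theorem \ref{thm_mul} with the dimension and multiplicity computations already carried out. The only bookkeeping step is separating the two meanings of $d$.
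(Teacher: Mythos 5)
Your proposal is correct and follows exactly the paper's own (one-line) proof: apply Theorem \ref{thm_mul} with $\mu = e_0(S/I_{n,d})$ from Theorem \ref{thm_path_ideals_cycles} and $\dim(S/I_{n,d}) = n-(k+1)$ from Corollary \ref{cor_dim_cyc}, so that $n - \dim(S/I_{n,d}) + s - 1 = k+s$. The extra bookkeeping you supply (separating the two uses of $d$ and checking $\lceil n/d\rceil = k+1$) is accurate and harmless.
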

\begin{proof}
    The conclusion follows from Theorem \ref{thm_mul}, Corollary \ref{cor_dim_cyc}, and Theorem \ref{thm_path_ideals_cycles}.
\end{proof}


\end{document}